\newtheorem{thm}{Theorem}[section]
\newtheorem{lem}[thm]{Lemma}
\newtheorem{exm}[thm]{Example}
\newtheorem{prop}[thm]{Proposition}
\theoremstyle{definition}
\newtheorem{defn}[thm]{Definition}
\theoremstyle{remark}
\newtheorem{rem}[thm]{\bf Remark}
\numberwithin{equation}{section}
\newcommand{\QB}{\operatorname{Q}}
\newcommand{\PB}{\operatorname{P}}
\newcommand{\Hom}{\operatorname{Hom}}
\newcommand{\Mod}{\operatorname{Mod}}
\newcommand{\Id}{\operatorname{Id}}
\newcommand\sbullet{{\scriptscriptstyle \bullet}}
\newcommand\scirc{{\scriptscriptstyle \circ}}
\newcommand\cdotl{\cdot_{\scriptscriptstyle L}}
\newcommand\cdotr{\cdot_{\scriptscriptstyle R}}
\newcommand{\rmnum}[1]{\romannumeral #1}
\newcommand{\Rmnum}[1]{\expandafter\@slowromancap\romannumeral #1@}
\begin{document}
\title[ (Quasi-)Poisson Enveloping Algebras]{(Quasi-)Poisson enveloping algebras}

\author{Yan-Hong Yang}
\address{Department of Mathematics, Columbia University, New York, New York 10027, USA;}
\email{yhyang$\symbol{64}$math.columbia.edu }

\author{Yuan Yao}
\address{Department of Mathematics, The University of Texas, 1 University Station C1200,
Austin, TX 78712, USA;}
\email{yyao$\symbol{64}$math.utexas.edu}

\author{Yu Ye$^{*}$}
\address{Yu Ye\\ Department of Mathematics\\ University of Science and
Technology of China\\ Hefei 230026, Anhui\\ PR China.}
\email{yeyu@ustc.edu.cn}

\keywords{Non-commutative Poisson algebra, Poisson module, Poisson
enveloping algebra}
\thanks{{\it 2000 Mathematics Subject Classification.} 16S40, 17B63.}
\thanks{$^*$ The corresponding author}

\maketitle

\begin{abstract}
 We introduce the quasi-Poisson enveloping algebra and Poisson enveloping
algebra for a non-commutative Poisson algebra. We prove that
for a non-commutative Poisson algebra, the category of quasi-Poisson modules is equivalent to
the category of left modules over its quasi-Poisson enveloping algebra, and the category of Poisson
modules is equivalent to the category of left modules over its
Poisson enveloping algebra.
\end{abstract}

\setcounter{section}{-1}
\section{Introduction}

Poisson algebras appear naturally in Hamiltonian mechanics, and play
a central role in the study of Poisson geometry and quantum groups. Poisson
manifolds are known as smooth manifolds equipped with a Poisson algebra structure.
One can consult \cite{va} for topics in Poisson geometry.

With the development of non-commutative geometry in the past
decades, versions of non-commutative Poisson-like structures
were introduced and studied by many authors from different perspectives
\cite{ko,lo,xu,fgv,rvw,ceg,vdb,fl,cp,yyz}.
Non-commutative Poisson algebra (NCPA) is one of them. The notion of NCPA is first
suggested by P. Xu \cite{xu}, which is especially suitable for geometric situation, and
developed by Flato, Gerstenhaber and Voronov to a more general case \cite{fgv}.

An important and practical idea to understand an algebraic object is to study the representations, or equivalently,
the modules over it. So is for Poisson algebra. Poisson modules over an NCPA are defined
in a natural way \cite{fgv}, see also in \cite[Section 0]{ku1}.
Note that when restricted to  a usual Poisson algebra, this notion of Poisson
module is different from the usual one studied in \cite[Definition 1]{oh} and \cite{fa}; see Remark \ref{poi-difference}.

In this paper, we introduce quasi-Poisson enveloping algebra and Poisson enveloping algebra
for a given NCPA. It turns out that the category of quasi-Poisson modules is
equivalent to the category of left modules over its quasi-Poisson enveloping algebra, and the
 category of Poisson modules is equivalent to the category of left modules over its Poisson enveloping
 algebra.

As a consequence, we give an affirmative answer to a question raised in
\cite[Section 2]{fgv}: does there exist enough injectives or projectives in the category of Poisson
modules? This will enable us to construct certain cohomology theory for NCPA in a standard way by using
projective and injective resolutions \cite{bh}. A possible application for algebraist is that cohomology theory of the standard Poisson algebras (see Example \ref{standard}) may give some brand new invariants of associative algebras.

Note that another version of Poisson enveloping algebra for a commutative Poisson algebra
was introduced in \cite[Definition 3]{oh}. We would like to emphasize that
Poisson enveloping algebra in our sense, when
restricted to a commutative Poisson algebra, is different from that one; see Remark \ref{Poi-env-difference}.

The paper is organized as follows. In section 1, we recall some definitions and notations. A characterization of Poisson-simple algebras is also given. Section 2 deals with the construction of quasi-Poisson enveloping algebra and
Poisson enveloping algebra for a given NCPA. In Section 3, we give our main result: the category of quasi-Poisson modules is equivalent to the category of left modules over the quasi-Poisson enveloping algebra; the category of Poisson modules is equivalent to the category of left modules over the Poisson enveloping algebra.

\section{Poisson algebras and modules}

Throughout $\mathbb{K}$ will be a fixed field of characteristic 0 and algebras considered are over $\mathbb{K}$. All
unadorned $\otimes$ will mean $\otimes_{\mathbb{K}}$. Without loss of generality, we assume that associative algebras
considered here have an identity.

\begin{defn} A \textbf{non-commutative Poisson algebra} (NCPA) over $\mathbb{K}$ is a triple  $A = (A, \sbullet, \{-,-\})$, where $A$ is a $\mathbb{K}$-space, $(A, \sbullet)$ is an associative algebra and $(A, \{-,-\})$ is a Lie algebra, such that the Leibniz
rule $\{ab,c\}=a \{b,c\}+\{a,c\} b$ holds for all $a,b,c\in A$, here we write $a\sbullet b$ as $ab$ for brevity. The operator $\sbullet$ is called the multiplication and $\{-,-\}$ the Poisson bracket of $A$.
\end{defn}

\begin{rem}Note that a usual Poisson algebra is by definition an NCPA with commutative multiplication. In this paper, when we use the term Poisson algebra we always mean a commutative one, otherwise we use the notion NCPA.
\end{rem}

\begin{exm}\label{standard} Any associative algebra $A$ is also a Lie algebra with the Lie bracket given by the commutator $[-,-]$, say $ [a, b] = a b - b a$ for any $a, b\in A$. One shows easily that $(A, \sbullet, [-,-])$ is an NCPA, which we called the standard NCPA of $A$.
\end{exm}

\begin{defn}
Let $A$ be an NCPA. A \textbf{quasi-Poisson $A$-module} $M$ is a an $A$-$A$-bimodule $M$ together with a
$\mathbb{K}$-bilinear map $\{-,-\}_* \colon A\times
M\longrightarrow M,$ which satisfies
\begin{align}\label{pm1}
&\{a,b\cdot m\}_*=\{a,b\}\cdot m+b\cdot \{a,m\}_*,
\\\label{pm2}&\{a,m\cdot b\}_*=m\cdot \{a,b\}+\{a,m\}_*\cdot b,\\
\label{pm3} &\{\{a,b\},m\}_*=\{a,\{b,m\}_*\}_* -\{b,\{a,m\}_*\}_*
\end{align}
for all $a,b\in A$ and $m\in M$, and $a\cdot m$ and $m\cdot a$ denote the left and right
$A$-action on $M$ respectively. Note that the last condition is just equivalent to say that
$M$ is a Lie module over $A$. If moreover,
\begin{align}
\label{pm4}\{a  b,m\}_*=a\cdot \{b,m\}_*+\{a,m\}_*\cdot b
\end{align}
holds for all $a, b\in A$ and $m\in M$, then $M$ is called a \textbf{Poisson $A$-module}.
A \textbf{homomorphism} of quasi-Poisson $A$-modules (\emph{resp.} Poisson modules) is a $\mathbb{K}$-linear
map $f\colon M\rightarrow N$ such that
\begin{align}
\label{pm5}&f(a\cdot m)=a\cdot f(m),\ \ f(m\cdot a)=f(m)\cdot a,
\\\label{pm6}&f(\{a,m\}_*)=\{a,f(m)\}_*
\end{align} hold for all $a\in A$ and $m\in M$.
We denote the category of quasi-Poisson $A$-modules
(\emph{resp.} Poisson modules) by ${\QB(A)}$
(\emph{resp.} ${\PB(A)}$).
\end{defn}

\begin{rem} The definition of Poisson $A$-module is suggested in \cite{fgv},
 see also \cite[Section 0]{ku1}. One of its immediate merits is that
$A$ itself can be naturally viewed as a Poisson $A$-module with the regular $A$-$A$-bimodule structure and
the action $\{-,-\}_*$ given by the Poisson bracket, i.e., $\{a,b\}_*=\{a,b\}$
 for all $a,b\in A$.
\end{rem}

\begin{exm}  Let $A$ be an NCPA. Then $A\otimes A$ admits a quasi-Poisson module structure with the actions given by $a\cdot(b\otimes c) = a b\otimes c$, $ (a\otimes b) \cdot c = a\otimes bc$ and $\{a, b\otimes c \}_* = \{a, b\}\otimes c + b\otimes \{a, c\}$ for all $a, b, c\in A$. Note that this does not give to a Poisson module in general.
\end{exm}

\begin{exm} Let $A$ be an NCPA with trivial Poisson bracket, say $\{-,-\} = 0$. Then any $A$-bimodule $M$ is a Poisson module under trivial Lie action, i.e. $\{-, -\}_* = 0$. Note that this is not true if the Poisson bracket is non-trivial.
\end{exm}

\begin{rem}\label{poi-difference} The usual Poisson modules over commutative Poisson algebras
(\cite[Definition 1]{oh}) are just the
Poisson modules defined here with coincide left and right associative actions.

Concerning the relations between Hochschild cohomology and deformation theory, Poisson modules in our sense by using bimodules seems to be more suitable for deformation theory, even for commutative Poisson algebras.
\end{rem}

Recall that a \textbf{Poisson derivation}
is a linear endomorphism of $A$ which is both an associative algebra
derivation and a Lie algebra derivation. We have the
following observation, generalizing \cite[Example 7]{fa}.

\begin{prop}
Let $A$ be an NCPA and $C(A)$ the center of $A$ in the
associative sense. If a $\mathbb{K}$-bilinear form $\{-,-\}_*\colon\ A\times A\rightarrow A$
makes the regular $A$-$A$-bimodule $A$  a
Poisson $A$-module, then the induced map
$$\psi\colon A\rightarrow A,\ \psi(a)=\{a,1_{A}\}_*,\ \forall a\in A $$
is a Poisson derivation satisfying $\psi(A)\subseteq C(A)$ and
\begin{align}\label{reg-poi-mod}
\psi(a)[b,c]=0,\ \forall\ a,b,c\in A,
\end{align}
here $[b,c] = bc -cb$ denotes the commutator.
Conversely, a Poisson derivation $\psi\colon A\rightarrow A$ satisfying $\psi(A)\subseteq C(A)$ and
 (\ref{reg-poi-mod})  makes the regular bimodule $A$ a Poisson $A$-module with  $\{-,-\}_*$ given by $$\{a,b\}_*=\{a,b\}+\psi(a)b,\
\forall a,b\in A.$$ Moreover, if $\dim_{\mathbb{K}}C(A)=1$, then there is a
unique Poisson $A$-module structure $(A, \{-,-\}_*)$ on the regular bimodule $A$, which is given by
$\{a,b\}_*=\{a,b\}$.
\end{prop}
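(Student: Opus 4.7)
The plan is to prove the forward direction by extracting structural information about $\psi$ from each of the module axioms in turn. I would begin by exploiting the regular bimodule structure, where the same element $bc\in A$ can be written as $b\cdot c$ or as $c$-action on $b$. Applying (pm1) reads $\{a,bc\}_{*}=\{a,b\}c+b\{a,c\}_{*}$, while applying (pm2) reads $\{a,bc\}_{*}=b\{a,c\}+\{a,b\}_{*}c$. Setting $\theta(a,x):=\{a,x\}_{*}-\{a,x\}$, the equality becomes $\theta(a,b)c=b\,\theta(a,c)$. Specialising $c=1$ yields $\theta(a,b)=b\psi(a)$, while specialising $b=1$ yields $\theta(a,c)=\psi(a)c$; together these give both $\psi(a)\in C(A)$ and the formula $\{a,b\}_{*}=\{a,b\}+\psi(a)b$.

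Next, I would extract the remaining properties of $\psi$. Taking $m=1$ in (pm4) (noting $\{a,1\}=0$, which follows from the Leibniz rule applied to $1=1\cdot 1$) gives $\psi(ab)=a\psi(b)+\psi(a)b$, so $\psi$ is an associative derivation. Taking $m=1$ in (pm3) gives $\psi(\{a,b\})=\{a,\psi(b)\}_{*}-\{b,\psi(a)\}_{*}$; expanding via the formula for $\{-,-\}_{*}$ and using that $\psi(a),\psi(b)\in C(A)$ commute, the quadratic term cancels and one recovers the Lie Leibniz rule for $\psi$. Finally, comparing (pm4) (with general $m=c$) to the direct expansion of $\{ab,c\}_{*}$ via the Leibniz rule for $\{-,-\}$ leaves only the discrepancy $\psi(a)bc-\psi(a)cb=\psi(a)[b,c]$, which must therefore vanish; this is exactly (\ref{reg-poi-mod}).

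For the converse I would verify each of (pm1)--(pm4) by a direct substitution of $\{a,b\}_{*}=\{a,b\}+\psi(a)b$: (pm1) and (pm2) reduce to the Leibniz rule for $\{-,-\}$ together with $\psi(a)\in C(A)$; (pm3) reduces to the Jacobi identity for $\{-,-\}$ together with the fact that $\psi$ is a Lie derivation and that $\psi(a),\psi(b)$ commute in $C(A)$; and (pm4) reduces, after cancellation, to the identity $\psi(a)[b,m]=0$, which is (\ref{reg-poi-mod}). None of these steps is hard individually; the only point requiring care is keeping track of which slot each occurrence of $\psi(a)$ sits in and using its centrality to commute it past the appropriate factor.

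For the uniqueness statement, the point is that $\dim_{\mathbb{K}}C(A)=1$ forces $C(A)=\mathbb{K}\cdot 1_{A}$, so every value $\psi(a)$ is a scalar multiple of $1_{A}$. If $A=\mathbb{K}$ there is nothing to prove; otherwise $A$ is strictly larger than its centre, hence noncommutative, so there exist $b,c\in A$ with $[b,c]\ne 0$. Then (\ref{reg-poi-mod}) applied with $\psi(a)=\lambda\cdot 1_{A}$ forces $\lambda=0$, hence $\psi\equiv 0$ and $\{-,-\}_{*}=\{-,-\}$. I expect the only genuinely delicate step to be the first one, where identifying $\{a,b\}_{*}-\{a,b\}$ with $\psi(a)b$ requires using (pm1) and (pm2) simultaneously on the regular bimodule; the rest is a careful but routine bookkeeping exercise.
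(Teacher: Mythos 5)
Your argument is correct throughout. The forward direction is essentially the paper's proof: extracting $\psi(a)\in C(A)$ and $\{a,b\}_*=\{a,b\}+\psi(a)b$ from (\ref{pm1})--(\ref{pm2}), the associative derivation property from (\ref{pm4}) at $m=1_A$, the Lie derivation property from (\ref{pm3}) at $m=1_A$ with the quadratic terms cancelling by centrality, and (\ref{reg-poi-mod}) by comparing the two expansions of $\{ab,c\}_*$; your $\theta(a,b)c=b\,\theta(a,c)$ reformulation is just a tidier packaging of the same computation. The converse, which the paper leaves to the reader, you verify correctly.

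Where you genuinely diverge is the uniqueness statement. The paper fixes a basis $\{e_1=1_A,e_2,\dots\}$, reduces to $\dim_{\mathbb K}A\ge 3$, and uses the derivation identity $\psi(e_ie_j)=\psi(e_i)e_j+e_i\psi(e_j)$ together with the linear independence of $1_A,e_i,e_j$ to force $\psi(e_i)=0$ for all $i\ge 2$. You instead observe that $\dim_{\mathbb K}C(A)=1$ with $\dim_{\mathbb K}A>1$ forces $A$ to be noncommutative (else $C(A)=A$), pick $b,c$ with $[b,c]\ne 0$, and apply (\ref{reg-poi-mod}) with $\psi(a)=\lambda 1_A$ to conclude $\lambda=0$. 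Your route is shorter, avoids choosing a basis, and sidesteps the paper's slightly glossed reduction to $\dim_{\mathbb K}A\ge 3$; its only cost is that it leans on (\ref{reg-poi-mod}), i.e., on the full strength of axiom (\ref{pm4}), whereas the paper's basis argument uses only the derivation property of $\psi$. Both are valid; yours is arguably the cleaner proof of this part.
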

\begin{proof} Suppose $A$ is a Poisson $A$-module. Set
$\psi(a)=\{a,1_{A}\}_*$. We will show that $\psi$ is a Poisson
derivation with $\psi(A)\subseteq C(A)$. For any $a,b\in A$,
\begin{align*}
\{a,b\}_*&=\{a,b\}+b\{a,1_{A}\}_*
\\&=\{a,b\}+\{a,1_{A}\}_*b,
\end{align*}
where the equalities follow from $b= b1_A = 1_Ab$ and (\ref{pm1}) and (\ref{pm2}). Hence $\psi(a)\in C(A)$ for all $a\in A$.

Next, we have
$\psi(ab)=\{ab,1_{A}\}_*=\{a,1_{A}\}_*b+a\{b,1_{A}\}_*=\psi(a)b+a\psi(b)$ by (\ref{pm4}), which
implies that $\psi$ is an associative derivation.

Moreover, the following equalities implies that $\psi$ is
also a Lie algebra derivation.
\begin{align*}
\psi(\{a,b\})&=\{\{a,b\},1_{A}\}_*=\{a,\{b,1_{A}\}_*\}_*
-\{b,\{a,1_{A}\}_*\}_*\\
&=\{a,\{b,1_{A}\}_*\}+\{b,1_{A}\}_*\{a,1_{A}\}_*
-\{b,\{a,1_{A}\}_*\}-\{a,1_{A}\}_*\{b,1_{A}\}_*\\
&=\{a,\psi(b)\}+\{\psi(a),b\}
\end{align*}

At last, $\forall\ a,b,c\in A$ we have
\begin{align*}
&\{ab,c\}_* = \{ab,c\}+\psi(ab)c
=a\{b,c\}+\{a,c\}b+a\psi(b)c+\psi(a)bc, \\
&a\{b,c\}_*+\{a,c\}_*b= a\{b,c\}+a\psi(b)c+\{a,c\}b+\psi(a)cb.
\end{align*}
Combined with (\ref{pm4}), we get the equality (\ref{reg-poi-mod}).

The proof of the converse part is easy and left to the reader.

 Now suppose that $\dim_{\mathbb{K}}C(A)=1$. It suffices
to consider the case $\dim_{\mathbb{K}}A\geq3$. Take a basis
$\{e_1, e_2, e_3\cdots\}$ of $A$ with $e_1=1_{A}$. Thus $ C(A)=\mathbb{K}e_1$.
Suppose that $(A,\{-,-\}_*)$ is a Poisson $A$-module and $\psi$ the
induced derivation. Then $$\psi(e_ie_j)=\psi(e_i)e_j+e_i\psi(e_j),\
\forall i\neq j\geq2.$$  Note that $\psi(e_ie_j)\in \mathbb{K}e_1$,
$\psi(e_i)e_j\in \mathbb{K}e_j$ and $e_i\psi(e_j)\in \mathbb{K}e_i$. By assumption $e_1,\ e_i,\
e_j$ are linearly independent and it forces $\psi(e_i)=\psi(e_j)=0,\
\forall i,\ j\geq2$. Since $\psi(1_{A})=0$ for any Poisson derivation $\psi$,
we obtain that $\psi(a)=0,\forall a \in A$, and hence
$\{a,b\}_*=\{a,b\}+\psi(a)b=\{a,b\},\ \forall a,b\in A$.
\end{proof}

\begin{rem}
For an NCPA, it is still not known whether the
above correspondence between the set of Poisson module structures
on the regular $A$-bimodule and the set of Poisson derivations is bijective.
This is just equivalent to ask whether any Poisson derivation for
an NCPA has the property (1.7).
\end{rem}

In the rest of this section, we will characterize the simplicity of an NCPA $A$
by studying its Poisson modules. Recall that a \textbf{left} (\emph{resp.} right,
two-sided) \textbf{Poisson ideal} of $A$ is defined to be
an ideal in the Lie algebra sense and a left (\emph{resp.} right,
two-sided) ideal in the associative algebra sense. When $I$ is a
two-sided Poisson ideal of $A$, it is easy to check that $A/I$ is a
Poisson $A$-module with the weak Poisson action given by
$\{a,b+I\}_*\triangleq\{a,b\}+I$ for any $a,b\in A$. Moreover, $A/I$ is
an NCPA with the obvious bracket
$\{a+I,b+I\}\triangleq\{a,b\}+I$.

An NCPA $A$ is \textbf{left} ({\it resp.} right) \textbf{Poisson-simple}
provided that any left ({\it resp.} right) Poisson ideal of $A$ is either 0 or $A$.
$A$ is said to be \textbf{Poisson-simple} if any two-sided Poisson ideal of $A$
is either 0 or $A$ itself. Clearly if $A$ is simple as an associative algebra, then
$A$ is Poisson-simple.

 Let $M$ be a quasi-Poisson $A$-module. The \textbf{annihilator} $Ann_A(M)$
of $M$ in $A$ is defined as $Ann_A(M)\triangleq \{a\in A\mid a\cdot m= m\cdot a=0, \forall m\in M\}$.
Clearly $Ann_A(M)$ is a two-sided ideal of $A$ in the associative sense. Moreover, $Ann_A(M)$ is
also a Lie ideal and hence a Poisson ideal, just as shown in the following lemma.

 \begin{lem}\label{anni-ideal} Let $A$ be an NCPA and $M$ a quasi-Poisson $A$-module.
Then $Ann_A(M)$ is a two-sided Poisson ideal of $A$.
\end{lem}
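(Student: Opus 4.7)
The plan is to verify that $Ann_A(M)$, already known to be a two-sided associative ideal, is also a Lie ideal of $A$. Together these properties make it a two-sided Poisson ideal, which is exactly what is claimed.

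Let $x \in Ann_A(M)$ and $a \in A$ be arbitrary. I want to show $\{a,x\} \in Ann_A(M)$, i.e.\ that $\{a,x\}\cdot m = 0$ and $m\cdot\{a,x\} = 0$ for every $m \in M$. Both identities will fall out of the Leibniz-type axioms (\ref{pm1}) and (\ref{pm2}) for the quasi-Poisson action. First, applying (\ref{pm1}) to the product $x\cdot m$, one obtains
\[
\{a, x\cdot m\}_* \;=\; \{a,x\}\cdot m + x\cdot\{a,m\}_*.
\]
The left-hand side is zero since $x\cdot m=0$, and the second term on the right is zero because $\{a,m\}_* \in M$ and $x$ annihilates $M$. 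Hence $\{a,x\}\cdot m = 0$. An entirely symmetric calculation with (\ref{pm2}) applied to $m\cdot x = 0$ yields $m\cdot\{a,x\} = 0$.

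Thus $\{a,x\} \in Ann_A(M)$ for all $a\in A$, so $Ann_A(M)$ is a Lie ideal of $A$; combined with its being a two-sided associative ideal, this gives the Poisson ideal property.

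There is essentially no obstacle here — the lemma is a direct consequence of the two Leibniz rules (\ref{pm1}), (\ref{pm2}); notably axiom (\ref{pm3}) and the Poisson-module condition (\ref{pm4}) are not needed, which is consistent with the statement being asserted for merely quasi-Poisson modules.
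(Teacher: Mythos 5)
Your argument is correct and coincides with the paper's own proof: both establish the Lie-ideal property by applying (\ref{pm1}) and (\ref{pm2}) to $x\cdot m=0$ and $m\cdot x=0$ for $x\in Ann_A(M)$, and the associative-ideal part is the routine verification already noted before the lemma. Nothing is missing.
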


\begin{proof}  Let
$a\in Ann_A(M)$ and $b\in A$. Then we have $(ba)m=b(am)=0$ and $m(ba)=(mb)a=0$, hence $ba\in Ann_A(M)$.
Similarly we have $ab\in Ann_A(M)$. Therefore $Ann_A(M)$ is a two-sided ideal
in the associative sense.

Since $M$ is a quasi-Poisson module by assumption, (\ref{pm1}) and (\ref{pm2}) implies that $\{b,a\}m=\{b,am\}_*-a\{b,m\}_*=0$ and
$m\{b,a\}=\{b,ma\}_*-\{b,m\}_*a=0$, thus $\{b,a\}\in
Ann_A(M)$. It concludes that $Ann(M)$ is a two-sided Poisson ideal.
\end{proof}

\begin{prop}
Let $A$ be an NCPA. The following statements are equivalent:
\begin{enumerate}
\item[(\rmnum{1})] $A$ is Poisson-simple;
\item[(\rmnum{2})] For every nonzero quasi-Poisson $A$-module $M$, $Ann_A(M)=0$;
\item[(\rmnum{3})] For every nonzero Poisson $A$-module $M$, $Ann_A(M)=0$.
\end{enumerate}
\end{prop}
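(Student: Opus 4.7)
The proof is a circular chain $(\mathrm{i}) \Rightarrow (\mathrm{ii}) \Rightarrow (\mathrm{iii}) \Rightarrow (\mathrm{i})$, in which the middle implication is trivial and the two outer implications essentially unpack the definitions, using Lemma \ref{anni-ideal} and the standard quotient construction.

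For $(\mathrm{i}) \Rightarrow (\mathrm{ii})$, I start with a nonzero quasi-Poisson module $M$. By Lemma \ref{anni-ideal}, $Ann_A(M)$ is a two-sided Poisson ideal of $A$, so Poisson-simplicity forces it to equal $0$ or $A$. The latter is impossible: if $1_A \in Ann_A(M)$, then $m = 1_A \cdot m = 0$ for every $m \in M$, contradicting $M \neq 0$. Thus $Ann_A(M) = 0$.

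The implication $(\mathrm{ii}) \Rightarrow (\mathrm{iii})$ is immediate since every Poisson $A$-module is in particular a quasi-Poisson $A$-module, and the annihilator is defined purely through the bimodule action.

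For $(\mathrm{iii}) \Rightarrow (\mathrm{i})$, let $I$ be a two-sided Poisson ideal with $I \neq A$. As noted in the paragraph preceding the proposition, $A/I$ carries a natural Poisson $A$-module structure via left/right multiplication and $\{a, b+I\}_* = \{a,b\} + I$, and $A/I$ is nonzero because $I \neq A$. Now for any $c \in I$ and $b \in A$, we have $c \cdot (b+I) = cb + I = 0$ and $(b+I)\cdot c = bc + I = 0$ because $I$ is a two-sided associative ideal; hence $I \subseteq Ann_A(A/I)$. Applying (iii) to $M = A/I$ gives $Ann_A(A/I) = 0$, so $I = 0$. This shows $A$ is Poisson-simple.

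There is no real obstacle; the only subtle point is recognizing that one needs to use $1_A$ to rule out $Ann_A(M) = A$ in the first step, and to invoke precisely Lemma \ref{anni-ideal} (which promotes the associative annihilator to a Poisson ideal) so that Poisson-simplicity applies.
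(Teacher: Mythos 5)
Your proof is correct and follows essentially the same route as the paper: the cycle $(\mathrm{i})\Rightarrow(\mathrm{ii})\Rightarrow(\mathrm{iii})\Rightarrow(\mathrm{i})$, with Lemma \ref{anni-ideal} and the unit $1_A$ handling the first implication and the Poisson module $A/I$ handling the last. The only cosmetic difference is that you verify just the inclusion $I\subseteq Ann_A(A/I)$ rather than the equality $Ann_A(A/I)=I$ asserted in the paper, which is all that the argument actually requires.
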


\begin{proof}
(\rmnum{1})$\Longrightarrow$(\rmnum{2}). Assume that $A$ is Poisson simple and $M$ is a
quasi-Poisson $A$-module. $Ann_A(M)$ is a two-sided Poisson ideal of $A$ by Lemma \ref{anni-ideal}.
$M\ne 0$ implies that $Ann_A(M)\ne A$, for $1_A\notin Ann_A(M)$ in this case. Now the simplicity
of $A$ forces $Ann_A(M) =0 $.

(\rmnum{2})$\Longrightarrow$(\rmnum{3}) is obvious, for a Poisson module is always a quasi-Poisson module.

(\rmnum{3})$\Longrightarrow$(\rmnum{1}). Suppose that for any Poisson $A$-module $M\ne0$, $Ann_A(M)=0$. Assume that $A$ is not Poisson-simple and let
$I\ne0$ be a proper two-sided Poisson ideal, then $A/I$ is a Poisson $A$-module such that $Ann_A(A/I)=I$, which leads to a contradiction. Thus $A$ must be Poisson-simple.
\end{proof}

\section{ Poisson enveloping algebras}

The main purpose of this section is to introduce the notion of Quasi-Poisson enveloping algebra and Poisson enveloping algebra for an NCPA. For this we need some preparation.

Recall that the opposite algebra $A^{op}$ of an
associative algebra $A$ has the same underlying vector space as $A$
and the multiplication is given in a reversed order, i.e.
$a\cdot_{A^{op}}b=b\cdot_{A}a$. The associative algebra
$A\otimes A^{op}$ is called the \textbf{enveloping algebra} of $A$ and
denoted by $A^{e}$. We denote $a\cdot_A b$ by $a\sbullet b$ and $a\cdot_{A^{op}}b$
by $a\scirc b$ to simplify the notation. Note that if $A$ is an NCPA , then $A^{op}$ is also an NCPA with
the same Poisson bracket as defined on $A$.

Let $H$ be a Hopf algebra and $A$ an $H$-module algebra, i.e. $A$ is an associative algebra and a left $H$-module, such that $h(1_A) = \epsilon(h)1_A$, $h(ab) = \sum (h_1a)(h_2b)$ for all $h\in H$ and $a\in A$, here we use the Sweedler's notation $ \Delta(h) = \sum h_1\otimes h_2$. Then we may define a multiplication $\star$ on the vector space  $A\otimes H$ by setting $$(a\otimes h)\star (a'\otimes h') = \sum ah_1(a')\otimes h_2h'$$ for all $a, a'\in A$ and $h, h'\in H$. It is not hard to show that $(A\otimes H, \star )$ is an associative algebra with identity $1_A \otimes 1_H$, which we call the \textbf{smash product} of $A$ and $H$ and denote by $A \# H $. Elements in $A\# H$ are usually written as $a\# h$ rather than $a\otimes h$. We refer to \cite{sw} for more details about Hopf algebras, module algebras over a Hopf algebra and their smash product.

\begin{rem}\label{mod-alg-rem} Let $H$ be a cocommutative Hopf algebra. Recall that $H$ is cocommutative if $\Delta(h) = \sum h_1\otimes h_2 = \sum h_2\otimes h_1$ for any $h\in H$. Then for any $H$-module algebra $A$, the opposite algebra $A^{op}$ is also a $H$-module algebra  with the same $H$-action as the one on $A$. Moreover, if $A$ and $B$ are both $H$-module algebra, then so is $A\otimes B$, here the $H$-action on $A\otimes B$ is given by the usual tensor product, say $h\cdot(a\otimes b)=\sum h_1\cdot_A a\otimes h_2\cdot_B b$ for any $h\in H$, $a\in A$ and $b\in B$. In
particular, the enveloping algebra $A^e$ of an $H$-module algebra $A$ is also an $H$-module algebra.
\end{rem}

Let $A$ be a $\mathbb{K}$-vector space and $T(A)$ the tensor algebra. By definition
there is a decomposition of vector spaces $T(A)=\bigoplus_{i=0}^\infty
T^i(A)$, making $T(A)$ a positively graded algebra, here $T^i(A)=A^{\otimes i}$. Any $a\in T^i(A)$ is said to be a
homogeneous element of degree $i$, denoted by $\deg(a)=i$,
particularly $\deg(1_{T(A)})=0$. We may also
identify $A$ with a subspace of $T(A)$, say $T^1(A)$, in an obvious way.

The following notations will be handy later on:

(i)\quad Let $\{v_i\}_{i\in S}$ be a fixed $\mathbb{K}-$basis of $A$, here $S$ is an index
set. Let $\alpha=(i(1),i(2),\cdots,i(r))\in S^r$ be a sequence in $S$. $r $ is called the degree of $\alpha$ and denoted by $r = \deg(\alpha)$. We denote the element $v_{i(1)} v_{i(2)}\cdots v_{i(r)}=v_{i(1)}\otimes v_{i(2)}\otimes\cdots\otimes v_{i(r)}$  by $\overrightarrow{\alpha}$ and also denote $1_{T(A)}$ by $\overrightarrow{\emptyset}$ for consistency, where $\emptyset$ is the empty sequence. Thus $\{\overrightarrow{\alpha}\mid \alpha\in S^r, r>0\}\cup \{1_{T(A)}\}$
form a basis of $T(A)$. For given $\alpha=(i(1),\cdots,i(r))$
and $\beta=(j(1),\cdots,j(s))$, we define
$\alpha\vee\beta\triangleq(i(1),\cdots,i(r),j(1),\cdots,j(s))$,
hence
$\overrightarrow{\alpha} \overrightarrow{\beta} = \overrightarrow{\alpha\vee\beta}$.

(ii)\quad Let $\alpha=(i(1),\cdots,i(r))$ and $ X\sqcup Y$ be an
ordered bipartition of $\underline{\rm{r}}=\{1,\cdots, r\}$, here
"ordered" means that $X\sqcup Y$ and $Y\sqcup X$ give different
bipartitions, which differs from the usual one. We would like to emphasize that $X$ and $Y$ are allowed to be
empty here. Suppose $X=\{X_1, X_2,\cdots, X_{r_1}\}$ and $Y=\{Y_1,
Y_2,\cdots, Y_{r_2}\}$ with $X_1< X_2<\cdots<X_{r_1}$ and $Y_1< Y_2<\cdots< Y_{r_2}$
, where $r_1=|X|$, $r_2=|Y|$. Set
$\alpha_X=(i(X_1),i(X_2), \cdots, i(X_{r_1}))$ and $\alpha_Y=
(i(Y_1), i(Y_2),\cdots, i(Y_{r_2}))$. By definition
$\alpha=\alpha_{X}\sqcup\alpha_{Y}$ is called an \textbf{ordered
bipartition} of $\alpha$ with respect to the ordered bipartition
$\underline{\rm{r}}=X\sqcup Y$.
 Similarly, one defines \textbf{ordered $n$-partitions} $\alpha=
\alpha_1\sqcup\alpha_2\cdots\sqcup\alpha_n$ for any $n\ge2$.

\begin{rem} Note that by definition, different partitions $\underline{\rm{r}}=X\sqcup Y$ and
$\underline{\rm{r}}=X'\sqcup Y'$ give rise to different partitions $\alpha = \alpha_X\sqcup\alpha_Y$ and
$\alpha = \alpha_{X'}\sqcup\alpha_{Y'}$, even if $\alpha_X=\alpha_{X'}$ and $\alpha_Y=\alpha_{Y'}$
as sequences in $S$.

For example, let $i\in S$ and $\alpha = (i, i)$. Then there
are exactly 4 ordered bipartitions of $\alpha$:
\begin{align*} &\{1,2\}= \emptyset\sqcup\{1,2\}, & \alpha=\emptyset\sqcup(i,i);
   \\ &\{1,2\}= \{1\}\sqcup \{2\}, & \alpha=(i)\sqcup(i);
   \\ &\{1,2\}= \{2\}\sqcup \{1\} , & \alpha=(i)\sqcup(i);
   \\ &\{1,2\}= \{1,2\}\sqcup \emptyset, & \alpha=(i,i)\sqcup\emptyset.
  \end{align*}

\end{rem}

 By the expression $\sum_{\alpha=\alpha_1\sqcup\alpha_2}$, we mean to take the sum
over each possible ordered bipartition of
$\{1,\cdots,\deg(\overrightarrow{\alpha})\}$. Similarly,
$\sum_{\alpha=\alpha_1\sqcup\alpha_2\sqcup\alpha_3}$ means the sum
is taken over all possible ordered tripartitions of
$\{1,\cdots,\deg(\overrightarrow{\alpha})\}$, and so on.

One shows that $T(A)$ admits a Hopf algebra structure. The counit $\epsilon$ is given by $\epsilon(1_{T(A)}) = 1$ and $\epsilon(\overrightarrow{\alpha}) = 0$ for all $\alpha\in S^r$ with $r> 0$. The coproduct $\Delta$ is given by
 \[ \Delta(\overrightarrow{\alpha}) = \sum_{\alpha=\alpha_1\sqcup \alpha_2} \overrightarrow{\alpha_1}\otimes \overrightarrow{\alpha_2}\] for all $\alpha\in S^r$ with $ r\ge 0$.
 In particular, $\Delta(1_{T(A)}) = 1_{T(A)}\otimes 1_{T(A)}$ and $\Delta(a) = 1_{T(A)}\otimes a + a\otimes 1_{T(A)}$ for any $a\in A$. The coproduct $\Delta$ is known as the ''shuffle coproduct''. By definition $\Delta$ is cocommutative, and hence $T(A)$ is a cocommutative Hopf algebra.

Assume further that $A$ is a Lie algebra and consider the \textbf{universal enveloping algebra}
$\mathcal{U}(A)$ of $A$. By definition $\mathcal{U}(A) = T(A)/I$, where $I$ is
the two-sided ideal of $T(A)$ generated by $\{x\otimes y-y\otimes x-\{x,y\}\mid x,y\in A\}$. We denote the
canonical projection by $\pi\colon T(A)\twoheadrightarrow\mathcal{U}(A)$. Moreover, $I$ is easily shown to be a Hopf ideal of $T(A)$ and hence $\mathcal{U}(A)$ inherits a cocommutative Hopf algebra structure.

Let $\{v_i\}_{i\in S} $ be a fixed $\mathbb{K}-$basis of $A$ with $S$ an index set. We may
assume that $S$ is totally ordered. According to the Poincare-Birkhoff-Witt
theorem, the elements \[\{\overrightarrow{\alpha}\mid \alpha = (i(1), i(2),\cdots, i(r))\in S^r,\
i(1)\le i(2)\le\cdots \le i(r), \ r\ge 1\},\] along with $1_{\mathcal{U}(A)}$, form a basis
of $\mathcal{U}(A)$, here we also denote the element $\pi(\overrightarrow{\alpha})$ in $\mathcal{U}(A)$ by $\overrightarrow{\alpha}$ by abuse of notation. Clearly we may identify $A$ with $\pi(T^1(A))$, a subspace of $\mathcal{U}(A)$.

Recall that for a Lie algebra $A$, one of the advantages of
$\mathcal{U}(A)$ is that the category of Lie modules over $A$ is equivalent to the category of left modules over
$\mathcal{U}(A)$. Particularly, the Lie bracket makes $A$ a Lie module and hence a $\mathcal{U}(A)$-module with
the action given by $\overrightarrow{\alpha}\cdot a = \{v_{i(1)},\{v_{i(2)},\{,\cdots,\{v_{i(r)},
a \}\cdots\}\}\}$ for all $\alpha =({i(1)},i(2),\cdots, {i(r)})\in S^r$ and $a\in A$. We denote the action $\overrightarrow{\alpha}\cdot a$ by $\overrightarrow{\alpha}(a)$. Note that
$A^{op}$ has the same underlying space as $A$ and hence the same $\mathcal{U}(A)$-action. Since $\mathcal{U}(A)$ is
a Hopf algebra, $A^e = A\otimes A^{op}$ is a tensor product of $\mathcal{U}(A)$-modules and hence a  $\mathcal{U}(A)$-module.  Explicitly,
$\overrightarrow{\alpha} (a\otimes b) = \sum_{\alpha=\alpha_1\sqcup\alpha_2}\overrightarrow{\alpha_1} (a)\otimes \overrightarrow{\alpha_2} (b)$ for all $\alpha\in S^r$ with $r\ge0$, $a,b \in A$.

\begin{prop}\label{prop-A-is-mod-alg} Let $A = (A, \sbullet, \{-,-\})$ be an NCPA. Then $A$, $A^{op}$ and $A^e$ are all
$\mathcal{U}(A)$-module algebras with the above $\mathcal{U}(A)$-module structure.
\end{prop}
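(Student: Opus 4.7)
The plan is to reduce the whole statement to a single computational observation about $A$ and then obtain the cases of $A^{op}$ and $A^e$ essentially for free. The cornerstone is that for every $x\in A$, the map $\{x,-\}\colon A\to A$ is a derivation of the associative product: applying the Leibniz rule $\{ab,c\}=a\{b,c\}+\{a,c\}b$ with $c=x$ together with the antisymmetry of the Poisson bracket yields $\{x,ab\}=a\{x,b\}+\{x,a\}b$. Note also that $\{v,1_A\}=0$ for every $v\in A$, obtained by applying Leibniz to $1_A\sbullet 1_A=1_A$. Before checking the module-algebra axioms one should remark that the iterated-bracket prescription indeed gives a well-defined $\mathcal{U}(A)$-action on $A$, since the defining relations $xy-yx-\{x,y\}$ act as zero on any $a\in A$ by the Jacobi identity.

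To show that $A$ is a $\mathcal{U}(A)$-module algebra I will check the two defining identities on the PBW-type basis $\{\overrightarrow{\alpha}\}$. For the unit axiom $\overrightarrow{\alpha}(1_A)=\epsilon(\overrightarrow{\alpha})\,1_A$, the case $\alpha=\emptyset$ is trivial, and for $\deg(\alpha)\ge 1$ the innermost bracket $\{v_{i(r)},1_A\}$ vanishes, so the whole iterated bracket equals $0=\epsilon(\overrightarrow{\alpha})\,1_A$. For the comultiplication axiom
\[\overrightarrow{\alpha}(ab)=\sum_{\alpha=\alpha_1\sqcup\alpha_2}\overrightarrow{\alpha_1}(a)\,\overrightarrow{\alpha_2}(b),\]
I will induct on $r=\deg(\alpha)$. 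The base $r=1$ is precisely the derivation property proved above. For $r\ge 2$, write $\alpha=(i(1))\vee\alpha'$ so that $\overrightarrow{\alpha}(ab)=\{v_{i(1)},\overrightarrow{\alpha'}(ab)\}$; applying the inductive hypothesis to $\overrightarrow{\alpha'}(ab)$ and then distributing $\{v_{i(1)},-\}$ as a derivation produces exactly the expected sum, since ordered bipartitions of $\{2,\dots,r\}$ together with a binary choice for the position $1$ biject with ordered bipartitions of $\{1,\dots,r\}$.

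The case of $A^{op}$ needs no new work: it carries the same underlying $\mathcal{U}(A)$-module structure, and $\{x,-\}$ remains a derivation for the opposite product because $\{x,a\scirc b\}=\{x,ba\}=b\{x,a\}+\{x,b\}a=\{x,a\}\scirc b+a\scirc\{x,b\}$, so the two axiom checks run verbatim. Finally $A^e=A\otimes A^{op}$ is a $\mathcal{U}(A)$-module algebra by Remark \ref{mod-alg-rem}, since $\mathcal{U}(A)$ is cocommutative and both tensor factors are now $\mathcal{U}(A)$-module algebras. The main technical hurdle will be the inductive step of the comultiplication axiom: the bookkeeping of ordered bipartitions (with empty blocks allowed) must be set up carefully so that the split $\alpha=(i(1))\vee\alpha'$ in the iterated bracket matches the shuffle coproduct summand by summand.
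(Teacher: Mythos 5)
Your proposal is correct, and its core is the same as the paper's: everything reduces to the degree-one case, where antisymmetry turns the Leibniz rule $\{ab,c\}=a\{b,c\}+\{a,c\}b$ into the statement that $\{v,-\}$ is a derivation of $\sbullet$, after which $A^{op}$ and $A^e$ come for free from the cocommutativity of $\mathcal{U}(A)$ via Remark \ref{mod-alg-rem}. The one place you diverge is the propagation from degree one to all of $\mathcal{U}(A)$: you run an explicit induction on word length, matching the split $\alpha=(i(1))\vee\alpha'$ against the shuffle coproduct via a bijection of ordered bipartitions, and you rightly flag this bookkeeping as the main hurdle. The paper sidesteps that combinatorics entirely with a cleaner observation: if $x$ and $y$ both satisfy $x(a\sbullet b)=\sum x_1(a)\sbullet x_2(b)$, then so does $xy$ (a two-line computation using that $\Delta$ is an algebra map), so it suffices to verify the identity on the Lie-algebra generators $A\subseteq\mathcal{U}(A)$. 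Your induction step is essentially the special case $x=v_{i(1)}$, $y=\overrightarrow{\alpha'}$ of that multiplicativity, so nothing is lost, but the paper's formulation spares you the bipartition bijection and the need to work with an explicit spanning set. Your added remarks on well-definedness of the action (via Jacobi) and on $\{v,1_A\}=0$ are correct and fill in details the paper leaves implicit.
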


\begin{proof} We begin with the algebra $A$. Clearly $1_{A}$ is a center element
of the Lie algebra $A$, hence $x(1_A) =\epsilon(x)1_A$ for any $x\in \mathcal{U}(A)$.
Next we prove  that
\begin{equation}\label{mod-alg} x (a\sbullet b) = \sum (x_1(a))\sbullet (x_2(b))\end{equation}
for all $x\in \mathcal{U}(A)$ and $a, b\in A$.
We need only to check it for a set of generators. In fact, if $x, y\in \mathcal{U}(A)$ are both satisfying (\ref{mod-alg}), then
\begin{align*}(xy)(a\sbullet b) &= x (y ( a\sbullet b ))
= x (\sum y_1(a)\sbullet y_2(b)) \\
&= \sum (x_1(y_1(a)))\sbullet (x_2(y_2(b)))\\
&= \sum((x_1y_1)(a))\sbullet ((x_2y_2)(b)) \\&
= \sum ((xy)_1(a))\sbullet ((xy)_2(b)),
\end{align*}
which implies that $xy $ also satisfies (\ref{mod-alg}).

Note that for any $v\in A$, $\Delta(v) = 1_{\mathcal{U}(A)}\otimes v + v\otimes 1_{\mathcal{U}(A)}$.
By (\ref{pm4}) we have
\[v {(a\sbullet b)} = \{v, a\sbullet b\} = a\sbullet \{v, b\} +  \{v, a\} \sbullet b = \sum (v_1(a))\sbullet (v_2(b))\]
for all $a, b \in A$. Since $A$ generates $\mathcal{U}(A)$, it follows that that $A$ is a $\mathcal{U}(A)$-module algebra. By Remark \ref{mod-alg-rem}, $A^{op}$ and $A^e$ are both $\mathcal{U}(A)$-module algebras.
\end{proof}

With the above preparation we give our main definition as follows.

\begin{defn}
Let $A=(A,\sbullet,\{-,-\})$ be an NCPA. The  smash product $A^e\# \mathcal{U}(A)$ is called the \textbf{quasi-Poisson enveloping algebra} of $A$ and denoted by  $\mathcal{Q}(A)$.
The \textbf{Poisson enveloping algebra} of $A$, denoted by $\mathcal{P}(A)$, is defined to be the quotient algebra $\mathcal{Q}(A)/J$, here $J$ is the ideal of $\mathcal{Q}(A)$ generated by
\[\{1_{A}\otimes1_{A^{op}}\# (a\sbullet b)-a\otimes1_{A^{op}}\# b-1_{A}\otimes b\# a|\
 a,b\in A\}.\]
\end{defn}

\begin{rem} By definition, $\mathcal{Q}(A) = A\otimes A^{op}\otimes \mathcal{U}(A)$ as a vector space. Thus $\mathcal{Q}(A)$ has a PBW-type basis given by
 \[\{v_i\otimes v_j\# \overrightarrow{\alpha}\mid i, j\in S, \alpha = (i(1), i(2),\cdots, i(r))\in S^r,\
i(1) \le \cdots \le i(r), \ r\ge 0\}.\]
The identity in $\mathcal{Q}(A)$ is given by $1_A\otimes 1_{A^{op}}\# 1_{\mathcal{U}(A)}$, and the multiplication in $\mathcal{Q}(A)$ can be written down explicitly as
\begin{equation*}
(\ v_{i_1}\otimes v_{j_1}\#\overrightarrow{\alpha})(
v_{i_2}\otimes v_{j_2}\#\overrightarrow{\beta}\ )=
\sum_{\alpha=\alpha_1\sqcup\alpha_2\sqcup\alpha_3}\
\!\!\!\!\!\!\!\!\!\!(v_{i_1}\sbullet\overrightarrow{\alpha_1}(v_{i_2}))\otimes
(v_{j_1}\scirc \overrightarrow{\alpha_2}(v_{j_2}))
\#(\overrightarrow{\alpha_3}\overrightarrow{\beta}),
\end{equation*}
here $\sbullet$ and $\scirc$ denote the multiplications in $A$ and $A^{op}$ respectively.
\end{rem}

\begin{rem}\label{Poi-env-difference} Recall that for a usual Poisson algebra $A$,
another version of Poisson enveloping algebra, denoted by $\mathcal{P'}(A)$ as distinguished from $\mathcal{P}(A)$, has
already been developed; see \cite[Definition 3]{oh}, for example.
This notion differs from ours, the reason is that the Poisson modules concerned are
different, just as mentioned in Remark \ref{poi-difference}. In fact,
$\mathcal{P'}(A)= \mathcal{P}(A)/\langle (a\otimes 1_{A^{op}} - 1_A\otimes a) \# 1_{\mathcal{U}(A)}, a\in A \rangle$.
\end{rem}

\begin{rem}\label{rem-embedding}
We have natural morphisms of associative algebras:
\begin{align*}
& i\colon A\hookrightarrow \mathcal{Q}(A),\ a\mapsto a\otimes1_{A^{op}}\#
1_{\mathcal{U}(A)}; & i'\colon A\hookrightarrow \mathcal{Q}(A) \twoheadrightarrow \mathcal{P}(A);\\
& k\colon A^{op}\hookrightarrow \mathcal{Q}(A),\ a\mapsto 1_{A}\otimes a\#
1_{\mathcal{U}(A)};
& k'\colon A^{op}\hookrightarrow \mathcal{Q}(A) \twoheadrightarrow \mathcal{P}(A);
\end{align*}
and morphisms of Lie algebras:
\begin{align*} &j\colon A\hookrightarrow
\mathcal{Q}(A),\ a\mapsto 1_A\otimes 1_{A^{op}}\# a; &j'\colon A\hookrightarrow\mathcal{Q}(A)\twoheadrightarrow
\mathcal{P}(A).
\end{align*}
By construction of smash product we know that $i, j,  k$ are all injective maps. Let $f\colon \mathcal{P}(A) \to A$ be the linear map given by $f(a\otimes b\# \overrightarrow{\alpha})= \epsilon(\overrightarrow{\alpha})ab$. Clearly $f\circ i'= f\circ k'= \Id_A$, which implies that $i'$ and $k'$ are both injective maps. Note that $j'$ is not an injective map, for $0\ne 1_A\in \operatorname{Ker}(j')$.
\end{rem}

Clearly $i(A), j(A)$ and $k(A)$ generate $\mathcal{Q}(A)$ and hence $\mathcal{P}(A)$. Moreover, the generating relations can be written down explicitly.

\begin{prop} \label{gen-rel}
Let  $A$ be an NCPA. Then  $\mathcal{Q}(A)$ is generated by the generators $i(a), j(a), k(a), a\in A$ subject to relations:
\begin{align*}
&i(a)i(b)=i(a\sbullet b),\ k(a)k(b)=k(a\scirc b),\ i(a)k(b)=k(b)i(a); \\
&j(a)j(b)-j(b)j(a) = j(\{a,b\});\\
&j(a)i(b)=i(b)j(a)+i(\{a,b\}),\ j(a)k(b)= k(b)j(a) + k(\{a,b\}),\ \forall a,b\in A.
\end{align*}
\end{prop}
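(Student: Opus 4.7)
The plan is to (a) verify directly that each listed relation holds in $\mathcal{Q}(A)$, (b) show that $i(A), j(A), k(A)$ generate $\mathcal{Q}(A)$, and (c) deduce that these constitute a full presentation by a PBW-style normal form argument.

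For (a), I would apply the explicit multiplication formula for $A^e \# \mathcal{U}(A)$ recorded in the remark preceding the proposition. The first three relations involve only elements whose $\mathcal{U}(A)$-component is $1_{\mathcal{U}(A)}$, so the formula collapses to ordinary multiplication in $A^e = A\otimes A^{op}$ and the identities are immediate. The relation $j(a)j(b) - j(b)j(a) = j(\{a,b\})$ reduces to $(1\otimes 1\#a)(1\otimes 1\#b) = 1\otimes 1\# ab$ in $\mathcal{U}(A)$ and the defining identity $ab - ba = \{a,b\}$ there. For $j(a)i(b)$, using $\Delta(a) = 1_{\mathcal{U}(A)}\otimes a + a\otimes 1_{\mathcal{U}(A)}$ together with the tensor-product $\mathcal{U}(A)$-action on $A^e$, one computes
\[ (1_A\otimes 1_{A^{op}}\# a)(b\otimes 1_{A^{op}}\# 1_{\mathcal{U}(A)}) = (b\otimes 1_{A^{op}})\# a + a\cdot(b\otimes 1_{A^{op}})\# 1_{\mathcal{U}(A)}. \]
Since $\{a,1_A\} = 0$, the tensor action gives $a\cdot(b\otimes 1_{A^{op}}) = \{a,b\}\otimes 1_{A^{op}}$, whence $j(a)i(b) = i(b)j(a) + i(\{a,b\})$. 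The $k$-version is strictly parallel, using that the $\mathcal{U}(A)$-action on $A^{op}$ coincides with the one on $A$.

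For (b), a direct expansion yields $i(a)k(b) = a\otimes b\# 1_{\mathcal{U}(A)}$ and then $i(a)k(b)j(c_1)j(c_2)\cdots j(c_r) = a\otimes b\# c_1c_2\cdots c_r$. Since the PBW basis of $\mathcal{Q}(A)$ consists of elements $v_i\otimes v_j\# \overrightarrow{\alpha}$ with $\overrightarrow\alpha$ a PBW monomial in $\mathcal{U}(A)$, such products already span $\mathcal{Q}(A)$. Therefore the canonical map $\varphi\colon \widetilde{\mathcal{Q}} \twoheadrightarrow \mathcal{Q}(A)$ from the $\mathbb{K}$-algebra $\widetilde{\mathcal{Q}}$ presented by the listed generators and relations is well-defined and surjective.

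For (c), I would use the fifth and sixth relations to push every $\widetilde j$-letter past every $\widetilde i$- and $\widetilde k$-letter to the right, the first three relations to merge the $\widetilde i$- and $\widetilde k$-letters into a single $\widetilde i(v)\widetilde k(w)$, and the fourth relation (which makes $a\mapsto \widetilde j(a)$ a Lie homomorphism, hence factor through $\mathcal{U}(A)$) together with PBW to reorder the surviving $\widetilde j$-letters. This shows that $\widetilde{\mathcal{Q}}$ is spanned by normal forms $\widetilde i(v_i)\widetilde k(v_j)\widetilde j(v_{i(1)})\cdots \widetilde j(v_{i(r)})$ with $i(1)\le\cdots\le i(r)$, which $\varphi$ sends bijectively to the PBW basis of $\mathcal{Q}(A)$; linear independence is automatic and $\varphi$ is an isomorphism.

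The main obstacle is confluence in step (c): ensuring that the rewriting terminates at a well-defined normal form, independent of the order in which reductions are applied. One can either perform a diamond-lemma check on the overlapping rules $\widetilde j\widetilde i$, $\widetilde j\widetilde k$, and $\widetilde j\widetilde j$, or proceed more conceptually by invoking the universal property of the smash product: the first three relations present $A^e$, the fourth presents $\mathcal{U}(A)$, and the last two encode precisely the $\mathcal{U}(A)$-module-algebra structure on $A^e$ established in Proposition \ref{prop-A-is-mod-alg}. Either way the target $\widetilde{\mathcal{Q}}$ matches $A^e\#\mathcal{U}(A) = \mathcal{Q}(A)$ as required.
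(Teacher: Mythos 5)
The paper declares this proof a ``routine check'' and omits it entirely, so there is no argument of record to compare against; what you have written is a complete and correct version of the check the authors skipped, and all of your individual computations (the collapse of the smash product formula when the $\mathcal{U}(A)$-component is $1$, the use of $\Delta(a)=1\otimes a+a\otimes 1$ and $\{a,1_A\}=0$ for the $j i$ and $j k$ relations, and the identity $i(a)k(b)j(c_1)\cdots j(c_r)=a\otimes b\,\#\,c_1\cdots c_r$ for generation) are right. Two refinements to step (c). First, the confluence worry you flag is a non-issue for the argument you actually run: you only need that \emph{some} rewriting strategy terminates in the claimed normal forms, so that they span $\widetilde{\mathcal{Q}}$ (clear by induction on word length and the number of $j$-letters standing to the left of an $i$- or $k$-letter, since each application of relations five and six strictly decreases this measure or the length); once the normal forms span and $\varphi$ carries them into the PBW basis of $\mathcal{Q}(A)$, any linear dependence among them upstairs would map to one downstairs, so they are automatically a basis and $\varphi$ is injective --- no diamond-lemma check is required. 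Second, a small but genuine gap in the normal-form count: the relations as listed do not force $i(1_A)=k(1_A)=1_{\widetilde{\mathcal{Q}}}$ (an idempotent satisfying $e\,i(b)=i(b)e=i(b)$ need not equal the unit of the free construction), so the empty word $1_{\widetilde{\mathcal{Q}}}$ and $\widetilde{i}(1_A)\widetilde{k}(1_A)$ are \emph{a priori} distinct spanning elements with the same image $1_A\otimes 1_{A^{op}}\#1_{\mathcal{U}(A)}$, and injectivity of $\varphi$ on your spanning set fails without the additional relation $i(1_A)=k(1_A)=1$. This is really a looseness in the proposition's statement (the unit relation is surely intended as part of the presentation of a unital algebra), but your proof should either impose it explicitly or work with non-unital presentations consistently.
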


The proof is given by routine check and we omit it here.

\begin{exm}
In case $A$ is an NCPA with trivial Poisson bracket, it is plain to see that
   $\mathcal{Q}(A)= A\otimes A^{op}\otimes S(A)$ as an associative algebra, here
 $S(A)$ is the polynomial ring of $A$.

\end{exm}

\begin{exm} Let $A= \mathbb{K} \times \mathbb{K}$ as an associative
algebra. Then there exists a unique Lie bracket, say the commutative one, making $A$ an NCPA.
We claim that in this case, $\mathcal{P}(A)\cong \mathbb{K}\times \mathbb{K}\times \mathbb{K}[x]\times \mathbb{K}[x]$.

In fact, $1_A= e_1 + e_2$ is an orthogonal decomposition of identity, here $e_1= (1,0)$
and $e_2= (0,1)$.  Set $e_{st} = i'(e_s)k'(e_t)$ for all $1\le s,t\le 2$,
$\alpha_{12} = i'(e_1)j'(e_1)k'(e_2)$ and $\alpha_{21} = i'(e_2)j'(e_2)k'(e_1)$.
For any subset $S\subseteq \mathcal{P}(A)$, we denote by $\mathbb{K}\lceil S\rceil$ the subalgebra of $\mathcal{P}(A)$
generated by $S$, note that we do not require that $1_{\mathcal{P}(A)}\in \mathbb{K}\lceil S\rceil$ here.

Consider subalgebras $A_{11}= \mathbb{K}\lceil e_{11}\rceil $, $A_{22}= \mathbb{K}\lceil e_{22}\rceil $,
$A_{12}= \mathbb{K}\lceil e_{12}, \alpha_{12}\rceil $ and $A_{21}= \mathbb{K}\lceil e_{21}, \alpha_{21}\rceil $.
Clearly $A_{11} \cong \mathbb{K}\cong A_{22}$ and $A_{12} \cong \mathbb{K}[x]\cong A_{21}$ as algebras. Moreover,
$\mathcal{P}(A) = A_{11} \times  A_{22}\times A_{12}\times A_{21} $ and the assertion follows.
\end{exm}

\begin{exm}Let $A=\mathbb{K}\langle x_1, x_2,\cdots, x_n\rangle/J^2$  be a 2-truncated algebra, where
$J= \langle x_1, x_2, \cdots, x_n \rangle$ is the graded Jacobson radical. Let $\{-,-\}$ be a
Lie bracket on $A$. Then $(A, \sbullet, \{-,-\})$ is an NCPA if and only if $1_A$ is a center
element in the Lie algebra $A$.

 Set $\alpha_i= i'(x_i)$, $\beta_i = k'(x_i)$ and $\gamma_i = j'(x_i)$ for any $1\le i\le n$, here $i', j', k'$ be  given as in Remark \ref{rem-embedding}. Then $\mathcal{P}(A)$ has a basis consisting of
\begin{gather*} \alpha_i, \beta_j, \alpha_i\beta_j, 1\le i, j\le n,\\
 \gamma_1^{a_1}\gamma_2^{a_2}\cdots \gamma_n^{a_n}, a_i\ge 0, i=1,2,\cdots, n,\\
 \alpha_i \gamma_1^{a_1}\gamma_2^{a_2}\cdots \gamma_n^{a_n}, a_i\ge 0, i=1,2,\cdots, n, \sum_{i=1}^{n}a_i >0.\end{gather*}
\end{exm}

\begin{rem}To calculate the Poisson enveloping algebras of an NCPA could be very difficult in general.
An open question is to ask whether there exist PBW-like bases for Poisson enveloping
algebras.
\end{rem}

\section{Poisson modules are modules}

We are in a position to exhibit the equivalence between the category of (quasi)-Poisson modules over an
NCPA and the category of modules over its (quasi)-Poisson enveloping algebras.

Let $A$ be an NCPA and $M\in \QB(A)$ a quasi-Poisson module. To avoid
confusion we fix some notations. The left, right and Lie action of $A$ on $M$ are denoted by $\cdotl$, $\cdotr$ and $\{-,-\}_*$ respectively. The induced action of $\mathcal{U}(A)$ on $M$ is denoted by $x(m)$ for any $x\in \mathcal{U}(A)$ and $m\in M$, to be precise, $\overrightarrow{\alpha}(m) = \{v_{i(1)},\{v_{i(2)},\{,\cdots,\{v_{i(r)}, a \}_*\cdots\}_*\}_*\}_*$ for any $\alpha=(i_1,\cdots,i_r)\in S^r$ and $m\in M$. By the similar argument used in the proof of Proposition \ref{prop-A-is-mod-alg}, we show that $x (a\cdotl m) = \sum x_1(a)\cdotl x_2(m)$ and $x(m\cdot b)=\sum x_1(m)\cdotr x_2(b)$ for any $x\in \mathcal{U}(A)$, $a, b\in A$ and $m\in M$, again we use the Sweedler's notation here. The action of $\mathcal{Q}(A)$
is always denoted by $\cdot$. The multiplications in $A$ and $A^{op}$ are denoted by $\sbullet$ and $\scirc$, respectively.

Now we define $F\colon \QB(A)\longrightarrow \Mod(\mathcal{Q}(A))$ as follows. Let $M\in \QB(A))$. We define an action of $\mathcal{Q}(A)$ on the underlying space $M$ by setting
\[ (v_i\otimes
v_j\#\overrightarrow{\alpha})\cdot m \triangleq v_i\cdotl(\overrightarrow{\alpha}(m))\cdotr v_j\]
for any $i, j\in S$, $\alpha\in S^r$ and $m\in M$. We claim that this action makes $M$ a $\mathcal{Q}(A)$-module. For this it suffices to show that the associativity
\begin{equation}
(v_{i_1}\otimes v_{j_1} \#\overrightarrow{\alpha}) ((v_{i_2}\otimes
v_{j_2}\#\overrightarrow{\beta})(m))= ((v_{i_1}\otimes v_{j_1}
\#\overrightarrow{\alpha}) (v_{i_2}\otimes
v_{j_2}\#\overrightarrow{\beta}))(m)
\end{equation}
holds for any $m\in M$, $i_1, i_2, j_1, j_2\in S$, $\alpha$ and $\beta$. One uses Proposition \ref{gen-rel} to give a proof; or one can give a direct argument as follows. In fact,
 the left hand side
 \begin{align*}LHS=&\ (v_{i_1}\otimes
v_{j_1}\#\overrightarrow{\alpha})
(v_{i_2}\cdotl(\overrightarrow{\beta}(m))\cdotr v_{j_2})
\\=&\ v_{i_1}\cdotl\overrightarrow{\alpha}(v_{i_2}\cdotl\overrightarrow{\beta}(m)\cdotr
v_{j_2})\cdotr v_{j_1}
\\ =&\ \sum_{\alpha=\alpha_1\sqcup\alpha_2\sqcup\alpha_3}
v_{i_1}\cdotl(\overrightarrow{\alpha_1}(v_{i_2})\cdotl
(\overrightarrow{\alpha_3}\overrightarrow{\beta})(m)\cdotr\overrightarrow{\alpha_2}
(v_{j_2}))\cdotr v_{j_1}
\\ =&\ \sum_{\alpha=\alpha_1\sqcup\alpha_2\sqcup\alpha_3}
(v_{i_1}\sbullet\overrightarrow{\alpha_1}(v_{i_2}))\cdotl
(\overrightarrow{\alpha_3}\overrightarrow{\beta})(m)\cdotr(v_{j_1}\scirc \overrightarrow{\alpha_2}
(v_{j_2}))
\\=&\ \sum_{\alpha=\alpha_1\sqcup\alpha_2\sqcup\alpha_3}
(v_{i_1}\sbullet\overrightarrow{\alpha_1}(v_{i_2})\otimes
v_{j_1}\scirc \overrightarrow{\alpha_2} (v_{j_2})\#
(\overrightarrow{\alpha_3}\overrightarrow{\beta}))(m)
\\=&\ RHS,
\end{align*}
 the right hand side. We denote by $FM$ the $\mathcal{U}(A)$-module with underlying space $M$ and the
 above action.

Next we define $G\colon\Mod(\mathcal{Q}(A))\longrightarrow \QB(A)$. Given $M\in
\Mod(\mathcal{Q}(A))$, we may endow the underlying space $M$
 a quasi-Poisson $A$-module structure by setting
\begin{align}
&a\cdotl m\triangleq i(a)\cdot m,\ m\cdotr a\triangleq
k(a)\cdot m, \ \forall a\in A,\ m\in M;\\&\{a, m\}_*\triangleq
j(a)\cdot m,\ \forall a\in A,\ m\in M.
\end{align}
Clearly $M$ is an $A$-bimodule as well as a Lie module with the above actions, since $i, k$ are
homomorphisms of associative algebras and $j$ is a homomorphism of Lie algebras.
We claim that $M$ is a quasi-Poisson module with the above actions. For this it suffices to show that
(\ref{pm1}) and (\ref{pm2}), which can be translated to
\begin{align*}
&j(a)\cdot(i(b)\cdot m) = i(\{a,b\})\cdot
m + i(b)\cdot(j(a)\cdot m),\\&j(a)\cdot(k(b)\cdot m)=
k(\{a,b\})\cdot m + k(b)\cdot(j(a)\cdot m),
\end{align*}
hold for any $a,b\in A$ and $m\in M$.  This is obvious since in
$\mathcal{Q}(A)$,
\begin{align*}
&j(a)i(b)=i(\{a,b\}) + i(b)j(a),
\\&j(a)k(b)=k(\{a,b\}) + k(b)j(a).
\end{align*}
We denote by $GM$ the quasi-Poisson $A$-module with the underlying space $M$ and the above actions.

Recall that there is a forgetful functor, denoted by $\mathbb{F}_{\QB(A)}$ from $\QB(A)$ to $\Mod({\mathbb{K}})$, the category of $\mathbb{K}$-vector spaces. Similarly we have forgetful functors $\mathbb{F}_{\Mod(\mathcal{Q}(A))}$, $\mathbb{F}_{\PB(A)}$ and $\mathbb{F}_{\Mod(\mathcal{P}(A))}$.

\begin{lem} Let $F\colon\QB(A)\rightarrow \Mod(\mathcal{Q}(A))$ and
$G\colon\Mod(\mathcal{Q}(A))\rightarrow \QB(A)$ be given as above. Then $F$ and $G$
give rise to functors respecting the forgetful functors to $\Mod({\mathbb{K}})$, i.e. $\mathbb{F}_{\QB(A)}=\mathbb{F}_{\Mod(\mathcal{Q}(A))} \circ F$ and $\mathbb{F}_{\Mod(\mathcal{Q}(A))} = \mathbb{F}_{\QB(A)} \circ G$.
\end{lem}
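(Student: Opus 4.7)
The plan is to extend $F$ and $G$ to morphisms by letting each act as the identity on the underlying $\mathbb{K}$-linear map, and then to verify the four required properties: (i) $F$ sends $\QB(A)$-morphisms to $\mathcal{Q}(A)$-linear maps, (ii) $G$ sends $\mathcal{Q}(A)$-linear maps to quasi-Poisson $A$-module morphisms, (iii) both assignments preserve identities and composition, and (iv) the forgetful functor compatibility holds. Once (i) and (ii) are in hand, properties (iii) and (iv) are immediate: on morphisms $F$ and $G$ are the identity of $\Hom_{\mathbb{K}}(M,N)$, so they preserve $\Id$ and composition strictly, and they trivially respect the forgetful functors to $\Mod(\mathbb{K})$ since the underlying vector space is unchanged.

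For (i), let $f\colon M\to N$ be a morphism in $\QB(A)$. Setting $F(f)=f$, I need to check that for any basis element $v_i\otimes v_j\#\overrightarrow{\alpha}$ of $\mathcal{Q}(A)$,
\[
f\bigl((v_i\otimes v_j\#\overrightarrow{\alpha})\cdot m\bigr)=(v_i\otimes v_j\#\overrightarrow{\alpha})\cdot f(m).
\]
Unwinding the definition $(v_i\otimes v_j\#\overrightarrow{\alpha})\cdot m=v_i\cdotl\overrightarrow{\alpha}(m)\cdotr v_j$ and using that $f$ preserves $\cdotl$ and $\cdotr$ by (\ref{pm5}), this reduces to $f(\overrightarrow{\alpha}(m))=\overrightarrow{\alpha}(f(m))$, which I will prove by induction on $\deg(\alpha)$: the base case $\alpha=\emptyset$ is trivial, and the inductive step follows from (\ref{pm6}) applied to the outermost bracket. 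Since the PBW basis generates $\mathcal{Q}(A)$ as a $\mathbb{K}$-space, $\mathbb{K}$-linearity of $f$ extends the identity to all of $\mathcal{Q}(A)$.

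For (ii), let $g\colon M\to N$ be a $\mathcal{Q}(A)$-linear map and set $G(g)=g$. The three axioms (\ref{pm5}) and (\ref{pm6}) for a $\QB(A)$-morphism translate, under the definitions $a\cdotl m=i(a)\cdot m$, $m\cdotr a=k(a)\cdot m$, $\{a,m\}_*=j(a)\cdot m$, into the single statement that $g$ commutes with the action of each of the elements $i(a),k(a),j(a)\in\mathcal{Q}(A)$, which is a special case of $\mathcal{Q}(A)$-linearity. No induction is needed here.

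The only mildly nontrivial point is the inductive verification in (i) that $f$ intertwines the induced $\mathcal{U}(A)$-actions on $M$ and $N$; everything else is formal. I do not anticipate a genuine obstacle, since the constructions of $F$ and $G$ have been engineered precisely so that the generators $i(a),j(a),k(a)$ match the three pieces of quasi-Poisson structure. Thus the lemma amounts to a bookkeeping exercise verifying that the bijection of structures established objectwise is natural in $M$.
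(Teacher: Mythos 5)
Your proposal is correct and follows essentially the same route as the paper: both define $F$ and $G$ to be the identity on underlying linear maps and reduce the only substantive check to showing that a quasi-Poisson morphism intertwines the actions of the basis elements $v_i\otimes v_j\#\overrightarrow{\alpha}$ (equivalently, the induced $\mathcal{U}(A)$-actions), with the converse direction for $G$ handled via the generators $i(a),j(a),k(a)$. The only difference is that you spell out the induction on $\deg(\alpha)$ that the paper leaves implicit in the line $f(v_i\cdotl\overrightarrow{\alpha}(m)\cdotr v_j)=v_i\cdotl\overrightarrow{\alpha}(f(m))\cdotr v_j$.
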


\begin{proof} We consider $F$ first. Note that for each $M \in \QB(A)$, we have defined an object
$FM\in \Mod(\mathcal{Q}(A))$ with $ FM$ sharing the same underlying space with $M$. To extend $F$ to
a functor, we need to define a map
\[F\colon \Hom_{\QB(A)}(M, N) \longrightarrow \Hom_{\mathcal{Q}(A)}(FM, FN)\]
for each pair of $M, N\in \QB(A)$, such that certain compatible conditions are satisfied. Now the requirement
to respect the forgetful functor means that for each $M\in \QB(A)$, $M$ and $FM$ have
the same underlying vector space, and for each $f\in \Hom_{\QB(A)}(M, N)$, $Ff = f$ as
morphisms of vector spaces.

To complete the proof we need only to show that if $f\colon M\longrightarrow N$ is a
morphism of quasi-Poisson modules, then $f$ is a morphism of $\mathcal{Q}(A)$-modules.
 This is easy to show. In fact, we have $f (v_i\cdotl \overrightarrow{\alpha}(m)\cdotr v_j) = v_i\cdotl \overrightarrow{\alpha}(f(m))\cdotr v_j$ and hence
\[f((v_i\otimes v_j\#\overrightarrow{\alpha})\cdot m)
= (v_i\otimes v_j\#\overrightarrow{\alpha})\cdot f(m)\]
for all $m\in M, i, j\in S$ and $\alpha$.

Similar argument works for $G$ and the lemma follows.
\end{proof}

Recall that the Poisson enveloping algebra $\mathcal{P}(A)$ of $A$ is by definition the
quotient algebra $\mathcal{Q}(A)/ J$,  where $J$ is the ideal of $\mathcal{Q}(A)$ generated by
\[\{1_{A}\otimes1_{A^{op}}\# (a\sbullet b)-a\otimes1_{A^{op}}\# b-1_{A}\otimes b\# a|\
 a,b\in A\}.\]
Thus $\Mod(\mathcal{P}(A))$ is a full subcategory of $\Mod(\mathcal{Q}(A))$, and a $\mathcal{Q}(A)$-module
$M$ is a $\mathcal{P}(A)$-module if and only if $J$ annihilates $M$.

 Now consider the restriction of the functor $F$ to $\PB(A)$, the full subcategory consisting
 of Poisson modules. Let $M\in \PB(A)$. One checks easily that in $FM$,
\begin{align*}  &(1_{A}\otimes1_{A^{op}}\# (a\sbullet b)-a\otimes1_{A^{op}}
 \# b-1_{A}\otimes b\# a)\cdot m\\
=& \{a\sbullet b, m\}_* - a\cdotl \{b, m\}_* - \{a, m\}_*\cdotr b = 0.
\end{align*}
The first equality follows from the construction of $FM$ and the latter one from the definition of
Poisson modules. Thus $J$ annihilates $FM$ and hence $FM$ is a $\mathcal{P}(A)$-module.
Therefore $F|_{\PB(A)}$ gives a functor $ F'\colon\PB(A)\longrightarrow \Mod(\mathcal{P}(A))$.

Conversely, the above equalities also implies that for each
 $\mathcal{P}(A)$-module $M$, $GM$ satisfies (\ref{pm4}), and
hence is a Poisson $A$-module. Thus $G|_{\Mod(\mathcal{P}(A))}$ gives a
functor $G'\colon \Mod(\mathcal{P}(A))\longrightarrow\PB(A)$.

Now our main result is stated as follows.

\begin{thm}
 Let $A$ be an NCPA. Then there are isomorphisms of categories $\QB(A) \cong \Mod(\mathcal{Q}(A)) $
and $\PB(A) \cong \Mod(\mathcal{P}(A)) $. In fact, we have
\begin{align}
&F\circ G = \Id_{\Mod(\mathcal{Q}(A))},\ G\circ F =\Id_{\QB(A)},
\\&F'\circ G'=\Id_{\Mod(\mathcal{P}(A))},\
G'\circ F'=\Id_{\PB(A)},
\end{align}
here $F, G, F', G'$ are functors given as above.
\end{thm}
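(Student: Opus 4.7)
The plan is to exploit the preceding lemma, which tells us that $F$ and $G$ both respect the forgetful functor to $\Mod(\mathbb{K})$; in particular $F$ and $G$ are identity on underlying vector spaces and on the underlying $\mathbb{K}$-linear maps. Consequently the four identities to be established reduce to the following two purely algebraic assertions: (a) for $M\in \QB(A)$, the quasi-Poisson structure recovered by $G(FM)$ coincides with the original one on $M$; (b) for $M\in \Mod(\mathcal{Q}(A))$, the $\mathcal{Q}(A)$-action recovered by $F(GM)$ coincides with the original one. Once these are proved, the Poisson versions follow automatically, because by construction $F$ sends $\PB(A)$ into $\Mod(\mathcal{P}(A))$ and $G$ sends $\Mod(\mathcal{P}(A))$ into $\PB(A)$, so restricting the equalities $F\circ G=\Id$ and $G\circ F=\Id$ to these full subcategories yields $F'\circ G'=\Id$ and $G'\circ F'=\Id$.

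For (a), I would substitute $v_i\otimes 1_{A^{op}}\# 1_{\mathcal{U}(A)}=i(a)$, $1_A\otimes v_j\# 1_{\mathcal{U}(A)}=k(a)$ and $1_A\otimes 1_{A^{op}}\# a=j(a)$ into the defining formula for the $\mathcal{Q}(A)$-action on $FM$. Because $\Delta(1_{\mathcal{U}(A)})=1_{\mathcal{U}(A)}\otimes 1_{\mathcal{U}(A)}$ and $\Delta(a)=1_{\mathcal{U}(A)}\otimes a + a\otimes 1_{\mathcal{U}(A)}$ for $a\in A$, one obtains $i(a)\cdot m=a\cdotl m$, $k(a)\cdot m=m\cdotr a$ and $j(a)\cdot m=\{a,m\}_*$. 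Plugging these identities into the definition of $G$ gives back the original bimodule structure and the original Lie action on $M$; hence $G\circ F=\Id_{\QB(A)}$.

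For (b), which is the main obstacle, one must show that for arbitrary $i,j\in S$ and $\alpha\in S^r$ one has
\[
(v_i\otimes v_j\# \overrightarrow{\alpha})\cdot m \;=\; v_i\cdotl(\overrightarrow{\alpha}(m))\cdotr v_j
\]
when the right-hand side is computed using the quasi-Poisson structure of $GM$ induced by the original $\mathcal{Q}(A)$-action. The key point is the factorization
\[
v_i\otimes v_j\# \overrightarrow{\alpha} \;=\; i(v_i)\, k(v_j)\, \widetilde{j}(\overrightarrow{\alpha})
\]
inside $\mathcal{Q}(A)$, where $\widetilde{j}\colon \mathcal{U}(A)\to \mathcal{Q}(A)$ is the algebra map extending $j$. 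To verify this factorization I would apply the explicit smash product multiplication formula stated after the definition of $\mathcal{Q}(A)$: multiplying $i(v_i)$ and $k(v_j)$ uses $\Delta(1_{\mathcal{U}(A)})=1\otimes 1$ and produces $v_i\otimes v_j\# 1_{\mathcal{U}(A)}$, and then multiplying by $\widetilde{j}(\overrightarrow{\alpha})=1_A\otimes 1_{A^{op}}\#\overrightarrow{\alpha}$ produces $v_i\otimes v_j\#\overrightarrow{\alpha}$ since all partitions of the empty sequence are trivial. (Alternatively the factorization is a direct consequence of the generating relations in Proposition \ref{gen-rel}.) Acting on $m$ and using that $\cdotl$ and $\cdotr$ commute on the bimodule $GM$ then yields exactly $v_i\cdotl(\overrightarrow{\alpha}(m))\cdotr v_j$, which is by definition $(v_i\otimes v_j\#\overrightarrow{\alpha})\cdot m$ in $F(GM)$.

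Finally, the Poisson case requires no new work: the computation displayed before the theorem (showing that the generators of $J$ annihilate $FM$ whenever $M\in \PB(A)$, and conversely that $GM$ satisfies \eqref{pm4} whenever $M\in \Mod(\mathcal{P}(A))$) together with the established equalities in (a) and (b) immediately give $F'\circ G'=\Id_{\Mod(\mathcal{P}(A))}$ and $G'\circ F'=\Id_{\PB(A)}$, so the asserted isomorphisms of categories follow.
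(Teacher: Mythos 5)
Your proposal is correct and follows essentially the same route as the paper: checking $G\circ F=\Id$ on the generators $i(a),k(a),j(a)$, checking $F\circ G=\Id$ via the factorization $v_i\otimes v_j\#\overrightarrow{\alpha}=i(v_i)k(v_j)(1_A\otimes 1_{A^{op}}\#\overrightarrow{\alpha})$ in $\mathcal{Q}(A)$, invoking compatibility with the forgetful functors to handle morphisms, and obtaining the Poisson case by restriction.
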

\begin{proof} First we show that $G \circ F(M)=M$. Since
$G \circ F(M)$ is defined to have the same underlying vector
space as $M$, it is only left to prove that they have the same actions of $A$. By definitions of $F$ and $G$, we have
\begin{align*}
&a\cdot_{G\scirc F(M)}(m)=i(a)\cdot_{F(M)}(m)=a\cdot_{M}(m),
\\&m\cdot_{G\scirc
F(M)}a=k(a)\cdot_{F(M)}(m)=m\cdot_{M}a,\\&\{a,m\}_{*G\scirc
F(M)}=j(a)\cdot_{F(M)}(m)=\{a,m\}_{*M}
\end{align*}
for all $a\in A$ and $m\in M$, where $\cdot_{G\scirc F(M)}$, $\{-,-\}_{*G\scirc F(M)}$,
$\cdot_{M}$ and $\{-,-\}_{*M}$ denote the actions of $A$ on the modules
$G\circ F(M)$ and $M$ respectively. Thus we have shown that $G\circ F(M) = M$ as quasi-Poisson $A$-modules.

Next we show that $F\circ G(M)= M$ for all $M\in
\Mod(\mathcal{Q}(A))$, again it suffices to show that \[(v_i\otimes
v_j\#\overrightarrow{\alpha})\cdot_{F\scirc G(M)} m=(v_i\otimes
v_j\#\overrightarrow{\alpha})\cdot_M m\] holds for any $v_i\otimes
v_j\#\overrightarrow{\alpha}\in \mathcal{Q}(A)$ and $m\in M$. This is true since by definition
\begin{align*}
&(v_i\otimes v_j\#\overrightarrow{\alpha})\cdot_{F\scirc G(M)} m\\
=& v_i \cdot_{G(M)}
(\overrightarrow{\alpha}_{G(M)}(m))\cdot_{G^1(M)}v_j\\
=& ((1_{A}\otimes v_j\# 1_{\mathcal{U}(A)})
(v_i\otimes1_{A^{op}}\# 1_{\mathcal{U}(A)})(1_{A}\otimes1_{A^{op}}\#\overrightarrow{\alpha}))\cdot_M m\\
=&(v_i\otimes v_j\#\overrightarrow{\alpha})\cdot_M m.
\end{align*}

Recall that $F$ and $G$ are both functors respecting the forgetful functor, which means
 $G\circ F(f) =f$ for each morphism $f$ in the category $\QB(A)$. It follows that
 $G\circ F =\Id_{\QB(A)}$ as functors.

Similarly, we obtain that $F\circ G = \Id_{\Mod(\mathcal{Q}(A))} $. Since $F'$ is a
restriction of $F$ and $G'$ is a restriction of $G$, we know that $F'\circ G'$ is a restriction
of $F\circ G$, and hence we deduce easily that $F'\circ G'=\Id_{\Mod(\mathcal{P}(A))}$.
Similarly we have $G'\circ F'=\Id_{\PB(A)}$. The proof is completed.
\end{proof}

\begin{rem}
An easy consequence of the theorem is that the categories $\QB(A)$ and $\PB(A)$ are
both abelian categories with enough projectives and injectives, which answers a
question raised in \cite[Section 2]{fgv}. The quasi-Poisson and Poisson enveloping
algebras also enable us to apply the ring theoretic methods to the study of NCPA,
especially to the study of cohomology theory of NCPA.
\end{rem}

\begin{rem} An NCPA $A$ itself can be viewed as a
$\mathcal{P}(A)$-module via the functor $F'$ and a Poisson ideal of
$A$ is just a $\mathcal{P}(A)$-submodule of $A$. Now $A$ is Poisson
simple if and only if $A$ is simple as a $\mathcal{P}(A)$-module.
\end{rem}

\begin{exm} Let $A$ be an associative algebra and consider the standard NCPA of $A$. It is direct to check that any $A$-bimodule $M$ is a Poisson $A$-module under the Lie action given by $\{a, m\}_* = a\cdot m-m\cdot a$. This gives a functor from the category $\Mod(A\otimes A^{op})$ to the category $\PB(A)$, and hence to $\Mod(\mathcal{P}(A))$. In fact, this functor is induced from an isomorphism of associative algebras $\psi\colon A\otimes A^{op}\cong \mathcal{P}(A)/I$, where $I= \langle j(a) - i(a) + k(a), a\in A\rangle$ is a two-sided ideal of $\mathcal{P}(A)$.

The isomorphism $\psi$ is given by the composition map $ A\otimes A^{op} \xrightarrow{f} \mathcal{P}(A) \xrightarrow{\pi} \mathcal{P}(A)/I$, here $f$ is defined by $f(a\otimes b) = i'(a)k'(b)$ for any $a,b\in A$ and $\pi$ is the canonical map. Obviously $\psi$ is an epimorphism, hence to show that $\psi$ is an isomorphism it suffices to show that it is also injective. For this, one uses the fact that the $A$-bimodule $A\otimes A^{op}$ is a Poisson module under the Lie action given by $\{c, a\otimes b\}_*= ca\otimes b - a\otimes bc$ for all $a, b, c\in A$. Thus $A\otimes A^{op}$ is a $\mathcal{P}(A)$-module. Moreover, the ideal $I$ annihilates $A\otimes A^{op}$ and hence $A\otimes A^{op}$ is a $\mathcal{P}(A)/I$-module. By definition of the functor $F$, $\psi(x)\cdot (1_A\otimes 1_{A^{op}}) = x $ for any $x\in A\otimes A^{op}$. It follows that $\psi(x)\ne 0$ for any $0\ne x\in A\otimes A^{op}$, which implies that $\psi$ is injective.
\end{exm}

\noindent{\bf Acknowledgements:} This work is supported by National Natural Science Foundation of China (No. 10971206). The authors would like to thank Dr. Yan-Hong Bao and Dr. Xiao-Wu Chen for helpful suggestions.

\bibliography{}

\end{document}